\newtheorem{definition}{Definition}
\newtheorem{lemma}[definition]{Lemma}
\newtheorem{theorem}[definition]{Theorem}
\newtheorem{proposition}[definition]{Proposition}
\newtheorem{remark}[definition]{Remark}
\newtheorem{conjecture}[definition]{Conjecture}
\newtheorem{problem}[definition]{Problem}
\title{Finite Representation Property for Relation Algebra Reducts}
\author{Ja{\v s} {\v S}emrl}
\date{November 2021}
\begin{document}

\maketitle

\begin{abstract}
    The decision problem of membership in the Representation Class of Relation Algebras (RRA) for finite structures is undecidable. However, this does not hold for many Relation Algebra reduct languages. Two well known properties that are sufficient for decidability are the Finite Axiomatisability (FA) of the representation class and the Finite Representation Property (FRP). Furthermore, neither of the properties is stronger that the other, and thus, neither is also a necessary condition. Although many results are known in the area of FA, the FRP remains unknown for the majority of the reduct languages. Here we conjecture that the FRP fails for a Relation Algebra reduct if and only if it contains both composition and negation, or both composition and meet. We then show the right-to-left implication of the conjecture holds and present preliminary results that suggest the left-to-right implication.
\end{abstract}

\section{Introduction}
\label{sec:intro}
Relation Algebra and its relational semantics are a neat algebraic tool for reasoning about various concepts, including program behaviour and correctness \cite{desharnais2009domain} as well as temporal and spatial reasoning \cite{duntsch2005relation}.

This is why the tractability of decision problems in the area is vital for the feasibility of these approaches. Two properties that guarantee the decidability of membership in the representation class are its finite axiomatisability and the finite representation property.

Neither of these properties hold for the full signature of Relation Algebras, in fact, the decision problem itself is undecidable \cite{hirsch2002relation}. This is why we drop some operations in the signature, to obtain a reduct language, and look for better behaviour there.

A number of results regarding finite axiomatisability of Relation Algebra reduct languages exist, though, less is known about the finite representation property. It is known, however, that neither property is stronger than the other. For example, the meet-lattice semigroups $\{\cdot, ;\}$ are a known example of a finitely axiomatisable representation class with no finite representation property \cite{bredihin1978representations, neuzerling2016undecidability}, whereas the demonically refined semigroups have been recently shown to be the first example with composition that has the finite representation property, but a non-finitely axiomatisable representation class \cite{hirsch2021finite}.

Here we examine the finite representation property in detail. We first conjecture that a Relation Algebra reduct language fails to have the finite representation property if and only if it includes both composition and negation, or both composition and meet.

Then we show that if negation and composition are in the signature, the finite representation property fails. This result, together with \cite{neuzerling2016undecidability}, shows the left-to-right implication of the conjecture. We continue by showing that a finite structure in any other reduct signature is finitely representable if and only if it embeds into a finite Relation Algebra. Although this does not prove the right-to-left implication of the conjecture outright, it does suggest that well known counterexamples to the finite representation property have a finite representation in these signatures.

\section{Preliminaries}
We now define the concepts discussed in the introduction formally. We do, however, assume that the reader is familiar with first order logic and Boolean Algebra.

We begin by defining relational composition ($;$), converse ($\smile$), and identity ($1'$). Let $R,S \subseteq X \times X$ be a pair of binary relations over the base $X$. We say
\begin{align*}
    R;S &= \{(x,z) \mid \exists y : (x,y) \in R, (y,z) \in S\}\\
    \Breve{R} &= \{ (y,x) \mid (x,y) \in R\} \\
    1' &= \{(x,x) \mid x \in X\}
\end{align*}
Together with the Boolean Operations, these define the class of proper relation algebras as follows
\begin{definition}[Proper Relation Algebra (PRA)]
The class of Proper Relation Algebras is the class of all $\{0,1,-,+,\cdot,1',\smile,;\}$-structures $\mathcal{S}$ that have some base set $X$, such that if $a \in \mathcal{S}$ then $a \subseteq X \times X$, $\{0,1,-,+,\cdot\}$ are interpreted as proper Boolean operations, and $\{1', \smile, ;\}$ are interpreted as proper relational identity, converse, and composition.
\end{definition}
This gives rise to the class of Representable Relation Algebras (RRA), defined as
\begin{definition}[Representable Relation Algebra (RRA)]
The class of Representable Relation Algebras is the class of Proper Relation Algebras, closed under isomorphic copies.
\end{definition}
\begin{remark}
Much like with the language of Relation Algebras, we may consider looking at relation algebra reduct languages $\tau \subseteq \{0,1,-,+,\cdot, 1', \smile, ;\}$, the class of all proper $\tau$-structures $P(\tau)$, and the class of all representable $\tau$-structures $R(\tau)$.
\end{remark}
\begin{definition}[(Finite) Representation]
A representation is an isomorphism that maps a structure in the representation class to a proper structure. It is finite if the elements of the image are relations over a finite base $X$.
\end{definition}
\begin{definition}[Finite Representation Property (FRP)]
A representation class is said to have the finite representation property if all its finite members have a finite representation.
\end{definition}
\begin{definition}[Finite Axiomatisability (FA)]
The representation class is finitely axiomatisable if membership can be axiomatised by finitely many first order formulas.
\end{definition}
It is known that the language of Relation Algebras has neither FA, nor the FRP. However, it is the case that the equational theory of Relation Algebras is finitely axiomatisable. This gives rise to the class of Relation Algebras (RA), which are all the $\{0,1,-,+,\cdot,1',\smile,;\}$-structures that model the equational theory of representable relation algebras. Alternatively, we can axiomatise the class as follows
\begin{enumerate}
    \item $\{0,1,-,+,\cdot\}$ is a Boolean Algebra
    \item $;, \smile$ are additive over $+$ (and thus monotone over $\leq$)
    \item $-(\Breve{a}) = (- a)^\smile$ and $a;1 = 1 \vee (-a);1 = 1$
    \item $(a;b)^\smile = \Breve{b};\Breve{a}$, $\Breve{\Breve{a}} = a$ , and $\Breve{1'} = 1'$
    \item $a \cdot (b;c) = 0 \Longleftrightarrow b \cdot (a;\Breve{c}) = 0$
\end{enumerate}
It is easy to see that $PRA \subseteq RRA \subseteq RA$. Furthermore, it is known that all of the inclusions are proper and thus $PRA \subset RRA \subset RA$.

Finally, we define the concept of an atom. As Relation Algebras are extensions of Boolean Algebras, we know that they will inevitably have atoms, which are all elements $a \neq 0$ such that for all $b$ if $b \leq a$ it is true that $b = 0 \vee b = a$. Thus every element of a relation algebra can be expressed as a unique join of atoms.

\section{The Conjecture and Related Work}
\label{sec:conj}
In this section we conjecture what reduct signatures of the language of Relation Algebras have the finite representation property. Then we survey the known results in the area and put the findings presented in the later sections into context. We begin by stating our conjecture regarding the FRP for Relation Algebra reduct languages.
\begin{conjecture}
\label{conj:theconjecture}
Let $\tau$ be a Relation Algebra reduct signature. $\tau$ has the finite representation property if and only if
$$\{-,;\} \not \subseteq \tau \not \supseteq \{\cdot,;\}$$
\end{conjecture}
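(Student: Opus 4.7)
The plan is to establish the biconditional by treating each direction separately.

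For the left-to-right direction of Conjecture \ref{conj:theconjecture}, I would argue by contraposition: show that if $\{-,;\} \subseteq \tau$ or $\{\cdot,;\} \subseteq \tau$, then some finite $\tau$-structure is representable without being finitely representable. The case $\{\cdot,;\} \subseteq \tau$ reduces to the known Bredikhin--Neuzerling counterexample \cite{bredihin1978representations, neuzerling2016undecidability}. Starting from their finite $\{\cdot,;\}$-structure $\mathcal{B}$ with no finite representation, I would fix a proper (necessarily infinite-based) representation $h$ of $\mathcal{B}$, close $h(\mathcal{B})$ under the extra operations in $\tau$, and verify that (i) the closure remains finite and (ii) any finite representation of the closure would restrict to a finite representation of $\mathcal{B}$, giving a contradiction. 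The main work lies in the case $\{-,;\} \subseteq \tau$, for which I would construct a new finite $\{-,;\}$-structure lacking the FRP. My design would use the expressiveness of $-$ and $;$ to encode configurations of forbidden equalities between composed terms that, by a pigeonhole-style argument, cannot be simultaneously realised over any finite base.

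For the right-to-left direction, I would split according to whether $; \in \tau$. When $; \notin \tau$, the signature sits inside $\{0,1,-,+,\cdot,1',\smile\}$ and finite representability follows from Stone duality together with a routine extension to $1'$ and $\smile$. When $; \in \tau$ but $-, \cdot \notin \tau$, necessarily $\tau \subseteq \{0,1,+,1',\smile,;\}$, and I would follow the embedding strategy hinted at in the introduction: prove that any finite representable $\tau$-structure $\mathcal{A}$ admits a $\tau$-embedding into some finite relation algebra $\mathcal{R}$, and then extract a finite representation of $\mathcal{A}$ from $\mathcal{R}$.

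The main obstacle is the last step above: a finite RA need not be in RRA. To bypass this, I would build $\mathcal{R}$ not abstractly but directly from an arbitrary proper representation $g$ of $\mathcal{A}$, by taking as atoms a finite partition of the pairs appearing in $g(\mathcal{A})$ that refines the image partition induced by $\mathcal{A}$ and is closed under the $\tau$-operations lifted via joins. The absence of $-$ and $\cdot$ is precisely what keeps this closure tractable, since it prevents the Boolean intersections responsible for the FRP failure in the $\{\cdot,;\}$ regime. Showing that a finite such partition always exists, and that the induced RA represents faithfully back down to $\mathcal{A}$ over a finite base, is the technically heaviest step, and I expect this is exactly where the paper acknowledges only preliminary progress.
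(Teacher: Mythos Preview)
The statement is labelled a \emph{conjecture} in the paper and is not proved there; only one direction is established, and the other is explicitly left open (see the Problems section). So there is no full proof to compare against, and your plan for the right-to-left direction necessarily goes beyond what the paper achieves.

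For the direction the paper does settle (contrapositive: $\{-,;\}\subseteq\tau$ or $\{\cdot,;\}\subseteq\tau$ implies failure of FRP), the paper's route is more concrete than yours and avoids a gap in your $\{\cdot,;\}$ subcase. The paper simply invokes Neuzerling's result, whose witnessing structure is already the reduct of a full relation algebra, so no closure under extra $\tau$-operations is needed. Your plan to close $h(\mathcal{B})$ under the additional operations and ``verify that the closure remains finite'' is not justified: alternating Boolean closure with composition on a finite set of relations over an infinite base need not stabilise in finitely many steps, and you offer no argument that it does. For $\{-,;\}\subseteq\tau$ the paper does not build a new structure but uses the Point Algebra; Lemmas~\ref{lem:negcompxy}--\ref{lem:negcompind} exploit the identities $1=\leq;>$, $\leq\;=\;\leq;\leq$, $>\;=\;>;>$ and $>=-\leq$ to force infinitely many distinct base points in any $\{-,;\}$-preserving map. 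Your ``pigeonhole-style'' description is in the same spirit but is only a sketch.

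For the converse direction the paper proves strictly less than your plan. Proposition~\ref{prop:embed} shows only that \emph{if} a finite $\tau$-structure embeds into some finite relation algebra, then the atom-indexed map $(a,b)\in s^\theta\Leftrightarrow a;s\geq b$ yields a finite representation respecting $0,1,+,1',\smile,;$. The paper does \emph{not} show that every finite representable $\tau$-structure admits such an embedding; this is posed as an open problem. Your partition-refinement construction of a finite RA from an arbitrary proper representation is precisely the missing ingredient, and your intuition that the absence of $-$ and $\cdot$ keeps the refinement finite is plausible but unproved---the paper neither supplies nor claims such an argument.
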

Let us start by examining the known results in the area of the finite representation property. We summarise the results in the table below.
\begin{center}
\begin{tabular}{|c|c|}
    \hline
    \textbf{No FRP} & \textbf{FRP}  \\ \hline
    $\{\cdot, ;\} \subseteq \tau$ \cite{neuzerling2016undecidability} & $\{;\}, \{1', ;\}$ \\
    $\{-, ;\} \subseteq \tau$ [Section~\ref{sec:negcomp}]& $\{\leq,;\}$ \cite{zareckii1959representation} \\
    & $\{D,\smile, ;\} \subseteq \tau \not \supseteq \{-, +, \cdot\}$ \cite{hirsch2013meet,semrl21domain}\\
    & $\{\leq, \setminus, /, \smile, ;\}$ \cite{rogozin2020finite} \\
    & $\{\sqsubseteq, ;\}$ \cite{hirsch2021finite} \\\hline
    
\end{tabular}
\end{center}
We see that the left-to-right implication of Conjecture~\ref{conj:theconjecture} is proven, when we combine our new result from Section~\ref{sec:negcomp} with the result from \cite{neuzerling2016undecidability}. The results on the right-to-left implication are, however, more sparse.

Representable $\{;\}$- and $\{1', ;\}$-structures are known to be finitely representable using the Cayley representation for groups. This result is extended by \cite{zareckii1959representation}, to allow for inclusion of partial ordering (provided there is no relational identity in the signature). In \cite{hirsch2013meet}, an explicit representation of representable $\{0,1,\leq,D,R,1',\smile,;\}$-structures is defined, and in \cite{semrl21domain} we show that this construction works for a wider similarity class of signatures. \cite{rogozin2020finite} defines finite representations for ordered residuated semigroups by embedding them into relational quantales. We also mention our FRP result for demonically refined semigroups where we define an explicit construction of a finite representation in \cite{hirsch2021finite}. Although strictly not a Relation Algebra reduct language, it was the first signature containing composition where the FA fails, but the FRP holds.

Similarly, there exist finitely signatures with both FA and FRP, neither FA nor FRP, and FA but no FRP. Examples are summarised in the table below.

\begin{center}
    \begin{tabular}{|c|cc|}
        \hline & \textbf{FRP} & \textbf{No FRP} \\ \hline
        \textbf{FA} & $\{\leq, ;\}$\cite{zareckii1959representation} & $\{\cdot, ; \}$ \cite{bredihin1978representations,neuzerling2016undecidability}\\
        \textbf{No FA} & $\{\sqsubseteq, ;\}$ \cite{hirsch2021finite} & $\{+,\cdot, ; \}$ \cite{hirsch2012undecidability}\\\hline
    \end{tabular}
\end{center}

\section{Failure of FRP for Negation and Composition}
\label{sec:negcomp}
In this section we prove that if a RA reduct language contains composition and negation, the finite representation property will fail. We do that by showing that the Point Algebra does not have a mapping to a proper relational structure over a finite base that correctly preserves negation and composition. This result, together with that in \cite{neuzerling2016undecidability} shows the FRP fails in all cases where Conjecture~\ref{conj:theconjecture} suggests it will fail, showing its right-to-left implication.

We begin by looking at the Point Algebra and define it as follows
\begin{definition}[Point Algebra]
Point Algebra $\mathcal{P}$ is a Proper Relation Algebra over the base of $\mathbb{Q}$ with eight elements
$$\{0,1,=,\neq, <,\leq, >, \geq \}$$
where $0$ is the empty relation and $1 = \mathbb{Q}\times\mathbb{Q}$ and the rest of the elements are the arithmetic binary predicates defined for $\mathbb{Q}$. Observe that The Point Algebra is closed under all the operations in the language of RA.
\end{definition}
\begin{lemma}
\label{lem:negcompxy}
For any mapping $\theta: \mathcal{P} \rightarrow \wp(X \times X)$ such that $(-a)^\theta = -(a^\theta)$ and $(a;b)^\theta = a^\theta;b^\theta$, there will exist some $x,y \in X$ such that $(x,y) \in 1^\theta$.
\end{lemma}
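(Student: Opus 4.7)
The plan is to argue by contradiction, showing that the two preservation properties, together with the basic arithmetic of $0$ and $1$ in $\mathcal{P}$, are inconsistent with $1^\theta$ being empty, provided $X$ is nonempty (an assumption implicit in the statement, since otherwise no pair $(x,y) \in X \times X$ exists at all and the conclusion is vacuously false).

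First I would observe that $1 = -0$ holds in $\mathcal{P}$, so preservation of negation yields $1^\theta = -(0^\theta)$. Assuming for contradiction that $1^\theta = \emptyset$, this forces $0^\theta = X \times X$, since the only subset of $X \times X$ whose complement in $X \times X$ is empty is $X \times X$ itself.

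Next I would exploit the composition identity $0;1 = 0$, which holds in $\mathcal{P}$ because $0$ is the empty relation and composing it on the left annihilates everything. Preservation of composition then gives $0^\theta = (0;1)^\theta = 0^\theta;1^\theta = (X \times X);\emptyset = \emptyset$, which combined with the previous step yields $X \times X = \emptyset$, contradicting $X \neq \emptyset$.

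I do not anticipate a substantive obstacle: the argument invokes only the two hypotheses of the lemma together with the elementary identities $-0 = 1$ and $0;1 = 0$ in $\mathcal{P}$, neither of which uses any structure of $\mathcal{P}$ beyond the special behaviour of its minimum and maximum elements. The only subtlety worth flagging is the empty-base case, which the lemma must be read as excluding.
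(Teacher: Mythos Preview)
Your argument is correct and follows essentially the same route as the paper: both use that $1^\theta$ and $0^\theta$ are complements in $X\times X$, and both derive the desired witness from an annihilation identity for composition (the paper uses $0 = 1;0$, you use $0 = 0;1$). Your contradiction framing is slightly more direct than the paper's case split, and you are right to flag that the statement implicitly requires $X \neq \emptyset$.
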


\begin{proof}
As $-$ is represented correctly and $0 = -1$, it must hold that $1^\theta$ is not an empty relation (i.e. there exists some $(x,y) \in 1^\theta$) or that $0^\theta$ is not an empty relation (i.e. there must exist some $(x,z) \in 0^\theta$). In the latter case observe that since $\theta$ represents composition correctly and $0 = 1;0$, there must exist some $y$ such that $(x,y) \in 1^\theta$.
\end{proof}

\begin{lemma}
\label{lem:negcompxx}
For any mapping $\theta: \mathcal{P} \rightarrow \wp(X \times X)$ such that $|X| < \omega$, $(-a)^\theta = -(a^\theta)$, and $(a;b)^\theta = a^\theta;b^\theta$, there will exist some $x \in X$ such that $(x,x) \in 1^\theta$.
\end{lemma}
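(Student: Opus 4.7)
The plan is to bootstrap from Lemma~\ref{lem:negcompxy}, which already gives us some pair $(x,y) \in 1^\theta$, and exploit both the algebraic identity $1 = 1;1$ in the Point Algebra and the finiteness of $X$ to force the witness to land on the diagonal.

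First I would observe that in $\mathcal{P}$ we have $1;1 = 1$ (composition of the universal relation over $\mathbb{Q}$ with itself is itself). Applying the hypothesis that $\theta$ preserves composition gives $1^\theta = 1^\theta ; 1^\theta$, and hence by an easy induction $1^\theta = \underbrace{1^\theta;1^\theta;\cdots;1^\theta}_{n\text{ times}}$ for every $n \geq 1$. In particular $1^\theta$ is transitive, since $1^\theta;1^\theta \subseteq 1^\theta$.

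Next, using Lemma~\ref{lem:negcompxy} to fix some $(x,y) \in 1^\theta$, I would choose $n = |X|+1$ and expand $(x,y)\in (1^\theta)^n$ to produce a sequence $x = x_0, x_1, \ldots, x_n = y$ with $(x_i, x_{i+1}) \in 1^\theta$ for every $i < n$. The pigeonhole principle then yields indices $i < j$ with $x_i = x_j$. Applying transitivity of $1^\theta$ along the sub-path from $x_i$ to $x_j$ gives $(x_i, x_j) \in 1^\theta$, i.e.\ $(x_i, x_i) \in 1^\theta$, completing the proof.

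I do not anticipate a serious obstacle here: the only substantive ingredients are the identity $1;1 = 1$ in $\mathcal{P}$, the composition-preservation hypothesis, and finiteness of $X$. The small conceptual point worth stating cleanly is that we need \emph{finiteness} of $X$ for the pigeonhole argument to kick in, which matches the distinction between this lemma and Lemma~\ref{lem:negcompxy}; negation was used only through the earlier lemma to secure non-emptiness of $1^\theta$, and plays no further role beyond that.
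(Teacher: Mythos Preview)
Your proposal is correct and matches the paper's own proof essentially step for step: both invoke Lemma~\ref{lem:negcompxy} for a starting pair, iterate the identity $1 = 1;1$ to obtain a chain of length exceeding $|X|$, apply pigeonhole to find a repeated vertex, and close up using $1^{j-i} = 1$ (equivalently, transitivity of $1^\theta$). Your phrasing via ``$1^\theta$ is transitive'' is a slightly cleaner packaging of the final step, but the argument is the same.
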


\begin{proof}
By Lemma~\ref{lem:negcompxy}, there must exist some $(y,z) \in 1^\theta$. Observe that $1 = 1;1 = ... = 1^{|X|+2} = ...$, so, in order for $\theta$ to represent $;$ correctly, there must exist $x_1, x_2, ..., x_{|X|+1} \in X$ such that $\{(y,x_1), (x_1, x_2), (x_2, x_3), ..., (x_{|X|}, x_{|X|+1}),$ $(x_{|X|+1},z)\} \subseteq 1^\theta$.

As $|X| < \omega$, we know that there exits some $i<j<|X|+2$ such that $x_i = x_j = x$. Observe that since $\{(x_i, x_{i+1}) , (x_{i+1}, x_{i+2}), ..., (x_{j-1},x_j)\} \subseteq 1^\theta$, and $1 = 1^{j-i}$, it must also hold that $(x,x) \in 1^\theta$, as $\theta$ represents $;$ correctly.
\end{proof}

\begin{lemma}
\label{lem:negcompind}
For any mapping $\theta: \mathcal{P} \rightarrow \wp(X \times X)$ such that there exist some $x \in X$ such that $(x_0,x_0) \in 1^\theta$, $(-a)^\theta = -(a^\theta)$, and $(a;b)^\theta = a^\theta;b^\theta$, there must exist, for any $n < \omega$, $x_1, x_2, ..., x_n \in X$ such that $x_i \neq x_j, (x_i, x_j) \in \leq^\theta$, and $ (x_j,x_i) \in >^\theta$, for all $0 \leq i<j \leq n$.
\end{lemma}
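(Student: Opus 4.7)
The plan is induction on $n$, with the auxiliary invariant $(x_n,x_n)\in 1^\theta$ carried along at each stage; the base case $n=0$ is vacuous and this invariant is exactly the hypothesis of the lemma. The argument rests on four identities holding in $\mathcal{P}$ by elementary properties of the ordering on $\mathbb{Q}$: the transitivities $\leq;\leq\,=\,\leq$ and $>;>\,=\,>$, and the spanning identities $\leq;>\,=\,1$ and $>;\leq\,=\,1$ (both using unboundedness of $\mathbb{Q}$). In addition, $-(>)=\leq$, which under preservation of negation forces $\leq^\theta\cap>^\theta=\emptyset$.

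For the inductive step, assume distinct $x_0,\dots,x_n$ satisfying the conclusion have been found, together with $(x_n,x_n)\in 1^\theta$. Preservation of composition gives $1^\theta=(\leq;>)^\theta=\leq^\theta;>^\theta$, so there is some $x_{n+1}\in X$ with $(x_n,x_{n+1})\in \leq^\theta$ and $(x_{n+1},x_n)\in >^\theta$. For each $i<n$, transitivity of $\leq^\theta$ applied to $(x_i,x_n),(x_n,x_{n+1})\in \leq^\theta$ yields $(x_i,x_{n+1})\in \leq^\theta$, and symmetrically $(x_{n+1},x_i)\in >^\theta$ from transitivity of $>^\theta$. Distinctness of $x_{n+1}$ from every earlier $x_i$ follows from $\leq^\theta\cap >^\theta=\emptyset$, since the assumption $x_{n+1}=x_i$ would place the pair $(x_i,x_i)$ in both of these disjoint sets. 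Finally, to carry the invariant forward, compose $(x_{n+1},x_n)\in >^\theta$ with $(x_n,x_{n+1})\in \leq^\theta$ and use $>;\leq\,=\,1$ to deduce $(x_{n+1},x_{n+1})\in 1^\theta$.

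The main subtlety is ensuring the induction can iterate: one must regenerate a reflexive witness $(x_{n+1},x_{n+1})\in 1^\theta$ at each stage, and this is exactly what the non-trivial spanning identity $>;\leq\,=\,1$ provides. Once that is in hand, $\leq;>\,=\,1$ produces a fresh element further up the chain and transitivity propagates the chain relations to all earlier indices. Notice that finiteness of $X$ plays no role in this lemma, having already done its work in Lemma~\ref{lem:negcompxx} by supplying the initial reflexive witness $(x_0,x_0)\in 1^\theta$.
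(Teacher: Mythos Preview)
Your proof is correct and follows essentially the same approach as the paper's: induction using $1=\leq;>$ to produce a new point, transitivity of $\leq$ and $>$ to propagate the chain relations, and disjointness of $\leq^\theta$ and $>^\theta$ (from preservation of negation) for distinctness. The only cosmetic difference is that you start the induction at $n=0$ and carry the reflexive witness $(x_n,x_n)\in 1^\theta$ as an explicit invariant, whereas the paper starts at $n=1$ and rederives this witness at each step by composing $(x_n,x_0)\in>^\theta$ with $(x_0,x_n)\in\leq^\theta$; both routes use $>;\leq=1$ in the same way.
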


\begin{proof}
We show this by induction. In the base case $n=1$. Since $(x_0,x_0) \in 1^\theta$, $1 = \leq;>$, and $\theta$ represents $;$ correctly, we know that there must exist $x_1$ such that $(x_0, x_1) \in \leq^\theta, (x_1, x_0) \in >^\theta$. If it were true that $x_1 = x_0$, then both $(x_0, x_0) \in \leq^\theta$ and $(x_0, x_0) \in >^\theta$ and since $> = -\leq$, $\theta$ would not represent $-$ correctly.

Now assume we have $x_0,x_1, ..., x_n$ for some $0 < n < \omega$ points for which the induction hypothesis holds. Observe that since $(x_n, x_0) \in >^\theta, (x_0,x_n) \in \leq^\theta$, and $\theta$ represents composition, it must hold that $(x_n,x_n) \in (>;\leq)^\theta = 1^\theta$. Thus, again to represent $;$, there must exist $x_{n+1}$ such that $(x_{n}, x_{n+1}) \in \leq^\theta, (x_{n+1}, x_{n}) \in >^\theta$, and as $>;> = >$ and $\leq;\leq = \leq$ it also holds that for all $i<n: (x_{i}, x_{n+1}) \in \leq^\theta, (x_{n+1}, x_{i}) \in >^\theta$. Observe that if $x_{n+1}$ was the same as $x_i$ for some $i \leq n$, it would hold that $(x_{i}, x_{n}) = (x_{n+1}, x_{n}) \in \leq^\theta$ as well as in $>^\theta$. Thus the induction hypothesis is preserved.
\end{proof}

\begin{theorem}
Any RA-reduct signature containing $\{-, ;\}$ fails to have the FRP.
\end{theorem}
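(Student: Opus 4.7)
The plan is to apply the three preceding lemmas to the $\tau$-reduct of the Point Algebra and derive a contradiction from the finiteness of the base. I would fix an arbitrary RA-reduct signature $\tau$ with $\{-,;\}\subseteq\tau$ and consider $\mathcal{P}$ viewed as a $\tau$-structure: this is finite (eight elements) and trivially representable, since $\mathcal{P}$ is already proper over $\mathbb{Q}$. So to defeat the FRP for $\tau$ it suffices to exhibit a single finite $\tau$-structure, namely $\mathcal{P}$ itself, with no finite representation.

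Next I would suppose for contradiction that some $\theta:\mathcal{P}\to \wp(X\times X)$ with $|X|<\omega$ is a representation. Because $\{-,;\}\subseteq\tau$, any representation of the $\tau$-reduct must preserve $-$ and $;$, so $\theta$ satisfies the hypotheses of Lemma~\ref{lem:negcompxx}. That lemma yields some $x_0\in X$ with $(x_0,x_0)\in 1^\theta$, which is exactly the hypothesis needed to invoke Lemma~\ref{lem:negcompind}. Applying Lemma~\ref{lem:negcompind} with $n=|X|$ produces $|X|+1$ pairwise distinct points of $X$, which is absurd. Hence no finite representation of $\mathcal{P}$ exists in $R(\tau)$, and $\tau$ fails to have the FRP.

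The only point that needs a little care is checking that the chain of lemmas really does go through for every $\tau\supseteq\{-,;\}$ rather than just for the full RA signature. The universe of the $\tau$-reduct of $\mathcal{P}$ is the same eight elements (in particular it still contains $0,1,\leq,>$), and the identities invoked in the supporting lemmas, namely $0=-1$, $0=1;0$, $1=1;1$, $1=\leq;>$, $>=-\leq$, $>;>\ =\ >$, and $\leq;\leq\ =\ \leq$, are all equalities that hold in $\mathcal{P}$ and involve only $-$ and $;$. Since $\theta$ preserves both these operations, each of these identities lifts to a corresponding set-theoretic identity in $\wp(X\times X)$, and the lemmas apply verbatim.

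I do not expect a genuine obstacle here, because all the substantive combinatorics, including the key pigeonhole argument producing a reflexive point in Lemma~\ref{lem:negcompxx} and the inductive construction of an arbitrarily long strict chain in Lemma~\ref{lem:negcompind}, has already been carried out. The only work left for the theorem itself is to package these lemmas with the observation above and note that the Point Algebra is a finite representable $\tau$-structure in every reduct containing $\{-,;\}$.
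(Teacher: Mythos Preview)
Your proposal is correct and follows exactly the paper's own argument: assume a finite representation of the $\tau$-reduct of the Point Algebra, invoke Lemma~\ref{lem:negcompxx} to obtain a reflexive point, then Lemma~\ref{lem:negcompind} with $n=|X|$ to force $|X|+1$ distinct points. Your additional paragraph verifying that the supporting lemmas only use identities expressible with $-$ and $;$ is a welcome clarification that the paper leaves implicit.
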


\begin{proof}
Suppose a signature $\{-,;\} \subseteq \tau$ had the FRP. Then the $\tau$-reduct of the Point Algebra would have a representation $\theta$ over a finite base $X$. By Lemma~\ref{lem:negcompxx}, there must exist some $x \in X$ such that $(x,x) \in 1^\theta$. This, together with Lemma~\ref{lem:negcompind} is sufficient for $X$ to have $|X|+1$ distinct points and we have reached a contradiction.
\end{proof}

\section{Embedding into RA and FRP}
In this section we examine why the Point Algebra, a counterexample to the FRP for signatures above $\{\cdot,;\}$ and $\{-,;\}$, has a finite representation in all other Relation Algebra reduct signatures. In fact, any structure in a signature not above $\{\cdot,;\}$ nor $\{-,;\}$ that embeds into a finite relation algebra (not necessarily representable) will have a finite representation.

Take a relation algebra $\mathcal{S}$ with the set of atoms $A$ and define a mapping 

$$\theta: \mathcal{S} \rightarrow \wp(A \times A)$$
where
$$(a,b) \in \mathcal{S}^\theta \Longleftrightarrow a;s \geq b$$

and observe the following

\begin{lemma}
\label{lem:embeds}
For any $s \not \leq t \in \mathcal{S}$, there exists a pair $(a,b) \in s^\theta - t^\theta$.
\end{lemma}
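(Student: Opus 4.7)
The plan is to exploit the atomicity of the finite relation algebra $\mathcal{S}$ together with the Peircean axiom (axiom~5) to locate the desired pair $(a,b)$. First, since $s \not\leq t$ the Boolean element $s \cdot (-t)$ is nonzero, so by atomicity I can pick an atom $b \leq s \cdot (-t)$; in particular $b \leq s$ and, because $b$ is an atom, $b \cdot t = 0$.

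The next step is to produce a companion atom $a$. The key observation is that $(b;\Breve{b}) \cdot 1'$ is nonzero for any nonzero $b$: one application of axiom~5 to the triple $(1', b, \Breve{b})$ rewrites $1' \cdot (b;\Breve{b}) = 0$ as $b \cdot (1';\Breve{\Breve{b}}) = b \cdot b = 0$, which is absurd. Pick any atom $a \leq (b;\Breve{b}) \cdot 1'$.

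Finally I would verify the two required halves. For $(a,b) \in s^\theta$, i.e.\ $b \leq a;s$: from $a \leq b;\Breve{b}$, monotonicity of $;$, and $\Breve{b} \leq \Breve{s}$ (which follows from $b \leq s$) I get $a \leq b;\Breve{s}$; a second use of axiom~5, together with the fact that $a$ and $b$ are atoms, flips this to $b \leq a;s$. For $(a,b) \notin t^\theta$, i.e.\ $b \not\leq a;t$: since $a \leq 1'$ we have $a;t \leq 1';t = t$, hence $b \cdot (a;t) \leq b \cdot t = 0$, and an atom is either below an element or disjoint from it.

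The only delicate point I foresee is choosing the witness $a$ correctly. The obvious guesses both fail: $a = 1'$ need not be atomic, and $a = b$ does not in general give $b \leq b;s$. The trick of picking $a$ below the diagonal part $(b;\Breve{b}) \cdot 1'$ is exactly what one needs: the factor $1'$ forces $a;t \leq t$, reducing the $t$-side to the disjointness $b \cdot t = 0$, while the factor $b;\Breve{b}$ preserves enough of a ``return path'' from $b$ that the Peircean law can convert $a \leq b;\Breve{s}$ into the required $b \leq a;s$.
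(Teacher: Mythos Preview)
Your proof is correct and follows essentially the same route as the paper: pick an atom $b \leq s \cdot (-t)$ and take as companion the domain element $1' \cdot (b;\Breve{b})$ (the paper calls it $D(b)$ and asserts it is itself an atom, whereas you more cautiously pick an atom beneath it). Your write-up is in fact more explicit than the paper's about the two Peircean-law applications that make the pair work.
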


\begin{proof}
Observe that since $s \not \leq t$, there must exist an atom $a \leq s-t$. If $a$ is an atom, so is $1' \cdot (a;\Breve{a}) = D(a)$. Thus $D(a);s \geq a$, by monotonicity, but not $D(a);t \geq a$ and thus $(D(a),a) \in s^\theta - t^\theta$.
\end{proof}

\begin{lemma}
$\theta$ represents $;, 1', \smile$ correctly.
\end{lemma}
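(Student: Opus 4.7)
The plan is to verify correctness of $\theta$ on each of $1'$, $\smile$, and $;$ separately, relying throughout on (i) the atomicity of $\mathcal{S}$ (which holds since in the intended FRP application $\mathcal{S}$ is finite), so that every element is a finite join of the atoms below it, (ii) the monotonicity, additivity, and associativity of composition, and (iii) the cycle law given by Axiom~(5) together with $\breve{\breve{s}} = s$ from Axiom~(4).

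First I would handle $1'$: unpacking the definition, $(a,b) \in (1')^\theta$ iff $a; 1' = a \geq b$, which for atoms $a,b$ forces $a = b$, so $(1')^\theta$ is exactly the diagonal of $A \times A$. For converse, $(a,b) \in (\breve{s})^\theta$ means $b \leq a; \breve{s}$, i.e.\ $b \cdot (a; \breve{s}) \neq 0$ because $b$ is an atom. Applying Axiom~(5) (with the roles of $a$ and $b$ swapped and $c = \breve{s}$) and using $\breve{\breve{s}} = s$ turns this into $a \cdot (b; s) \neq 0$, equivalently $a \leq b; s$, which is exactly $(b,a) \in s^\theta$. Hence $(\breve{s})^\theta$ is the relational converse of $s^\theta$.

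The composition case is the main obstacle and I would split it into two directions. The easy direction: if $(a,c) \in s^\theta$ and $(c,b) \in t^\theta$ then $a;s \geq c$ and $c;t \geq b$, so by monotonicity and associativity $a;(s;t) = (a;s);t \geq c;t \geq b$, giving $(a,b) \in (s;t)^\theta$. For the harder direction, suppose $b \leq (a;s);t$. Here I would invoke atomicity to write
\[
a;s \;=\; \bigvee\{c \in A : c \leq a;s\}
\]
and then push composition through this finite join by additivity on the left, obtaining $b \leq \bigvee_{c \leq a;s} c;t$. Because $b$ is an atom of a Boolean algebra lying below a finite join, it must lie below a single summand $c;t$, supplying an atom $c$ which simultaneously witnesses $(a,c) \in s^\theta$ and $(c,b) \in t^\theta$.

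The delicate step is this last "atom below a finite join lies below one summand" move: it is what forces the use of atomicity (hence the finiteness of $\mathcal{S}$) and is the only place where the argument goes beyond purely equational manipulation inside $\mathcal{S}$. Everything else reduces to bookkeeping with the basic RA axioms already listed in Section~2.
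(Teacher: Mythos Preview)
Your proposal is correct and follows essentially the same approach as the paper: identity via $a;1'=a$ and atomicity, converse via the Peircean law (Axiom~(5)) together with $\breve{\breve{s}}=s$, and composition in two halves with the harder inclusion handled by decomposing $a;s$ as a join of atoms and using additivity so that the atom $b$ must fall below a single summand $c;t$. Your treatment is in fact slightly more explicit than the paper's, which compresses the ``atom below a finite join lies below one summand'' step into the phrase ``by additivity and distributivity''; you are also right to flag that this is exactly where atomicity (hence finiteness of $\mathcal{S}$) is used.
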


\begin{proof}
Observe how by associativity and monotonicity if $a;s \geq b$ and $b;t \geq c$, we have $a;s;t \geq c$, so $s^\theta;t^\theta \leq (s;t)^\theta$. For $s^\theta;t^\theta \geq (s;t)^\theta$, observe that if $a;(s;t) \geq b$, then by associativity $(a;s);t \geq b$. By additivity and distributivity, there must thus exist an atom $c \leq a;s$ such that $c;t \geq b$.

Observe that for all $a$ it is true by the identity law and monotonicity that $a;1' \geq a$. Furthermore, if $a,b$ are atoms such that $a;1' = a \geq b$ then it must be the case that $a = b$.

Finally, if $a;s \geq b$, then $a;s \cdot b \neq 0$, and, by Percian triangle law, $b;\Breve{s} \cdot a \neq 0$. Since $a$ is an atom, this implies $a \leq b;\Breve{c}$ and thus it holds that if $(a,b) \in s^\theta$ then $(b,a) \in (\Breve{s})^\theta$.
\end{proof}

\begin{lemma}
\label{lem:embedf}
$0, 1, +$ are correctly represented by $\theta$.
\end{lemma}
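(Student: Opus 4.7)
The plan is to verify the three clauses separately, using different parts of the relation algebra axioms for each. The cases of $0$ and $+$ are routine manipulations, while the case of $1$ requires an appeal to the Tarski-style disjunctive clause of axiom~(3) and is where I expect the main difficulty to lie.

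For $0$, I would first establish $a;0 = 0$ in any relation algebra. Since $(-x)^\smile = -\Breve{x}$ together with additivity makes converse a Boolean isomorphism, $\Breve{0} = 0$; the Peircean triangle law $a \cdot (b;c) = 0 \iff b \cdot (a;\Breve{c}) = 0$ applied with $b = 0$ then gives $0;c = 0$ for every $c$, and taking converse yields $a;0 = 0$. Hence $(a,b) \in 0^\theta$ would force the atom $b$ to be below $0$, which is impossible, so $0^\theta = \emptyset$. For $+$, additivity of $;$ gives $a;(s+t) = a;s + a;t$; since atoms are join-prime in the Boolean reduct of $\mathcal{S}$, an atom $b$ satisfies $b \leq a;s + a;t$ iff $b \leq a;s$ or $b \leq a;t$, which is exactly $(s+t)^\theta = s^\theta \cup t^\theta$.

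For $1$, I would aim to show that $a;1 = 1$ for every atom $a$, so that $(a,b) \in 1^\theta$ holds for every pair of atoms and $1^\theta = A \times A$. Axiom~(3) supplies $a;1 = 1 \vee (-a);1 = 1$; to force the first disjunct for atomic $a$, I would assume $a;1 \neq 1$, pick an atom $d \leq -(a;1)$, apply the triangle law with $b = a, c = 1$ to obtain $a \cdot (d;1) = 0$, and then apply axiom~(3) to $d$: the subcase $d;1 = 1$ immediately contradicts atomicity of $a$, and the remaining subcase $(-d);1 = 1$ should close either by iterating the construction or by a simplicity argument implicit in axiom~(3). This last step is the one I expect to require the most care; the $0$ and $+$ cases are essentially routine applications of the Boolean and additivity axioms.
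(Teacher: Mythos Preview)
For $0$ and $+$ your argument is essentially the paper's: the paper simply states $a;0=0$ (without your derivation from the triangle law) and then uses additivity together with the fact that an atom below a join must lie below one of the joinands, exactly as you do.

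The divergence is the case of $1$. The paper's argument is a single monotonicity step: if $(a,b)\in s^\theta$ for any $s$, then $a;1\ge a;s\ge b$, so $(a,b)\in 1^\theta$; hence $1^\theta$ contains every $s^\theta$ and is the top of the image, which the paper takes as ``$1$ is represented correctly''. You instead aim for the stronger statement $1^\theta=A\times A$, equivalently $a;1=1$ for every atom $a$, and invoke the disjunctive clause of axiom~(3). Your sketch, however, does not close: after reducing to the subcase $(-d);1=1$ you defer to ``iterating the construction or a simplicity argument'', and that is precisely the substantive step you have not supplied. The claim you want \emph{is} true under the paper's axiom~(3)---one clean route is to observe that $a\le b;1$ defines an equivalence relation on atoms (reflexivity from $1'\le 1$, symmetry from the triangle law together with $\breve 1=1$, transitivity from associativity), and then applying axiom~(3) to the join of a single equivalence class forces there to be only one class---but your outline does not arrive there. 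So for $1$ your proposal is both harder than necessary and, as written, incomplete; the paper bypasses the entire difficulty with monotonicity alone.
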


\begin{proof}
As $a;0 = 0$, there is no pair of atoms $a,b$ such that $a;0 \geq b$ and thus $0$ is represented correctly. Furthermore, if there exists some $s$ such that $a;s \geq b$, then $a;1 \geq b$, by monotonicity, so $1$ is represented correctly as well.

Let us show that $s^\theta + t^\theta \leq (s+t)^\theta$. Without loss, suppose $a;s \geq b$. Then $a;(s+t) \geq b$ as well, by monotonicity. Finally, if $a;(s+t) \geq b$, then by distribuitivity of the lattice, it must hold that $a;s \geq b$ or $a;t \geq b$, so $s^\theta + t^\theta \geq (s+t)^\theta$.
\end{proof}

Thus we can conclude the following

\begin{proposition}
\label{prop:embed}
If $\tau$ is above neither $\{-,;\}$ nor $\{\cdot, ;\}$ and a finite $\tau$-structure embeds into a finite relation algebra, then it is finitely representable.
\end{proposition}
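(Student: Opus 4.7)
The plan is to show that the composition of the embedding $\iota : \mathcal{A} \hookrightarrow \mathcal{S}$ of the finite $\tau$-structure $\mathcal{A}$ into the finite relation algebra $\mathcal{S}$, with the map $\theta : \mathcal{S} \to \wp(A \times A)$ constructed above, yields a finite representation of $\mathcal{A}$.

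First I would split on whether $;$ belongs to $\tau$. If $; \in \tau$, the hypothesis forces $-, \cdot \notin \tau$, so $\tau \subseteq \{0,1,+,;,1',\smile\}$, which is precisely the set of operations that the three preceding lemmas show $\theta$ to represent correctly. If $; \notin \tau$, then $\tau$ may contain $-$ or $\cdot$, but these can be handled either by harmlessly enlarging $\tau$ with $;$ (the composition of $\mathcal{A}$ is then inherited from $\mathcal{S}$) and reducing to the previous case, or by a direct power-set representation of the Boolean reduct. The substantive case is therefore $; \in \tau$.

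Next I would verify the three ingredients of a finite representation for $\theta \circ \iota$. The base $A$ is finite since $\mathcal{S}$ is a finite Boolean algebra and $A$ is its set of atoms. For injectivity, $\iota$ is injective by definition of embedding, and for $\theta$: if $s \neq t$ in $\mathcal{S}$, then without loss of generality $s \not\leq t$, and Lemma~\ref{lem:embeds} supplies a pair in $s^\theta \setminus t^\theta$. For preservation of the operations in $\tau$, $\iota$ preserves them by hypothesis, and by the three preceding lemmas $\theta$ preserves every operation in $\{0,1,+,;,1',\smile\} \supseteq \tau$, so the composition preserves them as well.

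I do not expect a serious obstacle, as the preceding lemmas already do the substantive work and what remains is essentially bookkeeping. The only slightly delicate point is the case distinction on $;$: one must observe that $\theta$ is defined using the composition of $\mathcal{S}$ rather than of $\mathcal{A}$, so the construction remains available even when $;$ is absent from $\tau$, and it correctly represents whatever subset of $\{0,1,+,1',\smile\}$ actually lies in $\tau$.
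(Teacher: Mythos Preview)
Your proposal is correct and follows the same route as the paper's one-sentence proof: compose the embedding with $\theta$ and invoke Lemmas~\ref{lem:embeds}--\ref{lem:embedf} to conclude that the composite is a finite representation of the substructure. You are more explicit than the paper in splitting on whether $; \in \tau$; note only that your option of ``enlarging $\tau$ with $;$'' does not literally reduce to the previous case when $-$ or $\cdot$ is already present, though this composition-free sub-case is peripheral and the paper glosses over it as well.
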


\begin{proof}
By Lemmas~\ref{lem:embeds}-\ref{lem:embedf}, $\theta$ is a finite representation for a $\tau$-reduct of a Relation Algebra and all its substructures.
\end{proof}
Although this result does not prove the right-to-left implication of Conjecture~\ref{conj:theconjecture} outright, it does suggest that finding a counterexample to the FRP for any of the signatures, conjectured to have the FRP, is going to be a difficult undertaking. Point Algebra, or the Anti-Monk Algebra, two well known counterexamples to the FRP in certain signatures are both Relation Algebras and, as such, have a finite representation in signatures that do not contain both negation and composition, nor both meet and composition.

However, there do exist signatures where it is known that not all finite representable structures embed into a finite Relation Algebra. It is important to add, though, that these proofs heavily rely on the signature in question to be above $\{+,\cdot,;\}$ \cite{hirsch2012undecidability} or $\{-, ;\}$ \cite{neuzerling2016undecidability}.

\section{Problems}
Finally, we look at some open problems in the area of the Finite Representation Property. We have shown that the left-to-right implication of Conjecture~\ref{conj:theconjecture} holds and provided a proposition that suggests the right-to-left implication may hold as well. However, it does remain an open question whether or not these signatures have the FRP.
\begin{problem}
Do signatures $\tau$ that are above neither $\{\cdot, ;\}$ nor $\{-,;\}$ have the Finite Representation Property? 
\end{problem}
Answering this question, as a result of Proposition~\ref{prop:embed}, is equivalent to the following
\begin{problem}
For signatures $\tau$ above neither $\{\cdot, ;\}$ nor $\{-,;\}$, does every finite representable structure embed into a $\tau$-reduct of a finite Relation Algebra?
\end{problem}
However, defining such an embedding is not a trivial undertaking, as closing the structure of meet-negation completions under an associative composition operation gives rise to a number of challenges.

As discussed in Section~\ref{sec:intro}, FA and FRP are both sufficient conditions for the decidability of the membership in the representation class for finite structures. However, the following is not known.

\begin{problem}
Does there exist a Relation Algebra reduct language with decidable membership in representation class for finite structures decision problem, but neither FA nor FRP?
\end{problem}

A related decision problem remains open for the Language of Relation algebras and a number of its reducts.

\begin{problem}[\cite{maddux1994perspective}]
Is determining whether a finite Relation Algebra has a finite representation decidable?
\end{problem}

We have touched on the Finite Representation Property of demonically refined semigroups in Section~\ref{sec:conj}. The demonic refinement predicate is part of the Demonic Relational Calculus, defined to model the behaviour of nondeterministic programs when the Demon is in control of the nondeterministic choice. This gives rise to the following problems.
\begin{problem}
Does every operation in the language of Relation Algebra have a demonic coutnerpart? Does this allow us to define the Demonic Relation Algebra?
\end{problem}
\begin{problem}
What Demonic Relation Algebra Reduct Signatures have FA representation class? Which have the FRP? What about mixed signatures?
\end{problem}

\bibliographystyle{alpha}
\bibliography{ref}

\begin{thebibliography}{Mad94}

\bibitem[BS78]{bredihin1978representations}
Dmitri~A Bredihin and Boris~M Schein.
\newblock Representations of ordered semigroups and lattices by binary
  relations.
\newblock In {\em Colloquium Mathematicum}, volume~39, pages 1--12. Institute
  of Mathematics Polish Academy of Sciences, 1978.

\bibitem[DJS09]{desharnais2009domain}
Jules Desharnais, Peter Jipsen, and Georg Struth.
\newblock Domain and antidomain semigroups.
\newblock In {\em International Conference on Relational Methods in Computer
  Science}, pages 73--87. Springer, 2009.

\bibitem[D{\"u}n05]{duntsch2005relation}
Ivo D{\"u}ntsch.
\newblock Relation algebras and their application in temporal and spatial
  reasoning.
\newblock {\em Artificial Intelligence Review}, 23(4):315--357, 2005.

\bibitem[HE13]{hirsch2013meet}
Robin Hirsch and R~Egrot.
\newblock Meet-completions and representations of ordered domain algebras.
\newblock {\em The Journal of Symbolic Logic}, 2013.

\bibitem[HH02]{hirsch2002relation}
Robin Hirsch and Ian Hodkinson.
\newblock {\em Relation algebras by games}.
\newblock Elsevier, 2002.

\bibitem[HJ12]{hirsch2012undecidability}
Robin Hirsch and Marcel Jackson.
\newblock Undecidability of representability as binary relations.
\newblock {\em The Journal of Symbolic Logic}, 77(4):1211--1244, 2012.

\bibitem[H{\v{S}}21]{hirsch2021finite}
Robin Hirsch and Ja{\v{s}} {\v{S}}emrl.
\newblock Finite representability of semigroups with demonic refinement.
\newblock {\em Algebra universalis}, 82(2):1--14, 2021.

\bibitem[Mad94]{maddux1994perspective}
Roger~D Maddux.
\newblock A perspective on the theory of relation algebras.
\newblock {\em Algebra Universalis}, 31(3):456--465, 1994.

\bibitem[Neu16]{neuzerling2016undecidability}
Murray Neuzerling.
\newblock Undecidability of representability for lattice-ordered semigroups and
  ordered complemented semigroups.
\newblock {\em Algebra universalis}, 76(4):431--443, 2016.

\bibitem[Rog20]{rogozin2020finite}
Daniel Rogozin.
\newblock The finite representation property for some reducts of relation
  algebras.
\newblock {\em arXiv preprint arXiv:2007.13079}, 2020.

\bibitem[{\v{S}}em21]{semrl21domain}
Ja{\v{s}} {\v{S}}emrl.
\newblock Domain range semigroups and finite representations.
\newblock In Uli Fahrenberg, Mai Gehrke, Luigi Santocanale, and Michael Winter,
  editors, {\em Relational and Algebraic Methods in Computer Science}, pages
  483--498, Cham, 2021. Springer International Publishing.

\bibitem[Zar59]{zareckii1959representation}
KA~Zareckii.
\newblock The representation of ordered semigroups by binary relations, izv.
  vyss. ucebn. zaved.
\newblock {\em Matematika}, page~13, 1959.

\end{thebibliography}

\end{document}